\newtheorem{theorem}{Theorem}[section]
\newcounter{cl}[theorem]
\newtheorem{claim}[cl]{Claim}
\newtheorem*{claim*}{Claim}
\newtheorem*{subclaim*}{Subclaim}
\theoremstyle{definition}
\newtheorem*{definition*}{Definition}
\newcounter{ca}[cl]
\newcounter{sca}[ca]
\newcounter{ssca}[sca]
\theoremstyle{remark}
\newtheorem*{remark*}{Remark}
\newcommand{\ZFC}{\ensuremath{\operatorname{ZFC}} }
\newcommand{\ZF}{\ensuremath{\operatorname{ZF}} }
\newcommand{\AD}{\ensuremath{\operatorname{AD}} }
\newcommand{\cO}{\mathcal{O}}
\begin{document}
\title[Independence Phenomena in Mathematics]{Independence Phenomena in Mathematics:\\ a Set Theoretic Perspective on Current Obstacles and Scenarios for Solutions}



\author{Sandra M\"uller} \address{Sandra M\"uller, Institut f\"ur Diskrete Mathematik und Geometrie, TU Wien, Wiedner Hauptstra{\ss}e 8-10/104, 1040 Wien, Austria.}
\email{sandra.mueller@tuwien.ac.at}
\thanks{This research was funded in whole or in part by the Austrian Science Fund (FWF) [10.55776/Y1498, 10.55776/I6087, 10.55776/V844]. For open access purposes, the author has applied a CC BY public copyright license to any author accepted manuscript version arising from this submission.}

\subjclass[2010]{} 

\keywords{} 


\begin{abstract}
  The standard axioms of set theory, the Zermelo-Fraenkel axioms (ZFC), do not suffice to answer all questions in mathematics. While this follows abstractly from Kurt Gödel's famous incompleteness theorems, we nowadays know numerous concrete examples for such questions. A large number of problems in set theory, for example, regularity properties such as Lebesgue measurability and the Baire property are not decided - for even rather simple (for example, projective) sets of reals - by ZFC. Even many problems outside of set theory have been showed to be unsolvable, meaning neither their truth nor their failure can be proven from ZFC. A major part of set theory is devoted to attacking this problem by studying various extensions of ZFC and their properties. We outline some of these extensions and explain current obstacles in understanding their impact on the set theoretical universe together with recent progress on these questions and future scenarios. This work is related to the overall goal to identify the ``right'' axioms for mathematics. 
\end{abstract}
\maketitle
\setcounter{tocdepth}{1}

\section{Introduction}
Kurt Gödel predicted in the 1930's that the standard axioms of set theory, Zermelo-Fraenkel set theory ($\ensuremath{\operatorname{ZF}}$) with Choice ($\ensuremath{\operatorname{ZFC}}$), do not suffice to answer all questions in mathematics. More precisely, he showed in his first incompleteness theorem that there are statements that can neither be proven nor disproven from the $\ensuremath{\operatorname{ZF}}$ or $\ensuremath{\operatorname{ZFC}}$ axioms (or any sufficiently nice extension thereof).\footnote{Gödel showed his first incompleteness theorem in fact for an axiomatic system weaker than $\ensuremath{\operatorname{ZF}}$ but this will not be relevant in this article. We always assume here that there is a model of $\ZF$.} While Gödel's famous incompleteness theorems are abstract theoretical results, we nowadays know numerous concrete examples for such questions.
In addition to a large number of problems in set theory,
even many problems outside of set theory are known to be unsolvable, meaning neither their truth nor their failure can be proven from $\ensuremath{\operatorname{ZFC}}$. We call such problems \emph{independent from $\ZFC$}. This includes the Whitehead Problem (group theory, \cite{Sh74}), the Borel Conjecture (measure theory, \cite{La76}), Kaplansky's Conjecture on Banach algebras (analysis, \cite{DW87}), and the Brown-Douglas-Fillmore Problem (operator algebras, \cite{Fa11}). A major part of set theory is devoted to attacking this problem by studying various extensions of $\ZF$ and their properties. One of the main goals of current research in set theory is to identify \emph{the ``right'' axioms for mathematics} that settle these problems. This, in part philosophical, problem is attacked with technical mathematical methods by analyzing various set theoretic universes satisfying different extensions of $\ZF$.

A widely useful and well-studied collection of such axioms is given by {\bf determinacy assumptions}: These are canonical extensions of $\ensuremath{\operatorname{ZF}}$ or $\ensuremath{\operatorname{ZFC}}$ that postulate the existence of winning strategies in natural two-player games \cite{GS53, MySt62}. For every set of infinite sequences of natural numbers\footnote{As common in set theory, we will tacitly identify infinite sequences of natural numbers with reals.} $A \subseteq \mathbb{N}^{\mathbb{N}}$, we consider an infinite two-player game where player $\mathrm{I}$ and player
$\mathrm{II}$ alternate playing natural numbers $n_0, n_1, \dots$, as follows: \vspace{-0.2cm}
\[ \begin{array}{c|ccccc} \mathrm{I} & n_0 & & n_2 & &\hdots \\ \hline
    \mathrm{II} & & n_1 & & n_3 & \hdots 
   \end{array} \]

Then player $\mathrm{I}$ wins the game if and only if the sequence $x = (n_0,n_1,\dots)$ of natural numbers produced during a run of the game is
an element of $A$; otherwise, player $\mathrm{II}$ wins. We call $A$ the \emph{payoff set} of this game. A set $A$ is called \emph{determined} if and only if
one of the two players has a winning strategy in the game with payoff set $A$, meaning that there is a method by
which they can win in the game described above, no matter what their opponent does. The \textbf{Axiom of Determinacy} ($\AD$) is the statement that all sets of reals are determined. Determinacy hypotheses are known to 
enhance sets of real numbers with a great deal of canonical structure. 
But while a pioneering result of Gale and Stewart shows that every open and every closed set in $\mathbb{N}^{\mathbb{N}}$ is determined in $\ZFC$, determinacy for more complex, for example, analytic, sets cannot be proven in $\ZFC$ alone. It should be mentioned at this point that the full Axiom of Determinacy as stated above contradicts the Axiom of Choice, so it can only be considered as an extension of $\ZF$, not of $\ZFC$.

Other natural and well-studied extensions of $\ZFC$ are given by the hierarchy of {\bf large cardinal axioms}. Measurable cardinals, for example, were introduced by Ulam \cite{Ul30} and further developed by Keisler and Scott \cite{Kei62, Sc61}. Determinacy assumptions, large cardinal axioms, and their consequences are widely used and have many fruitful implications in set theory and even in other areas of mathematics such as algebraic topology \cite{CSS05}, topology \cite{Ny80,Fl82,CMM1}, algebra \cite{EM02}, and operator algebras \cite{Fa11}. Many such applications, in particular, proofs of consistency strength lower bounds, exploit the interplay of large cardinals and determinacy axioms. Thus, understanding the connections between determinacy assumptions and the hierarchy of large cardinals is vital to answer questions left open by $\ZFC$ itself.

A third canonical line of axioms extending $\ZFC$ is given by {\bf forcing axioms}. They postulate that certain sets used to build canonical extensions of the set theoretic universe exist. The first forcing axioms that was isolated was Martin's Axiom introduced by Martin and inspired by Solovay's and Tennenbaum's proof of the independence of Souslin's Hypothesis \cite{MaSo70, SoTe71}. Other well-known examples with many applications are the Proper Forcing Axiom introduced by Baumgartner (see \cite{Mo10}) and the in some sense ``ultimate forcing axiom'' Martin's Maximum introduced by Foreman, Magidor, and Shelah \cite{FMS88}.

\section{An example: The Continuum Problem}

Arguably the most famous statement that is known to be independent from $\ZFC$ is Cantor's Continuum Problem. It was formulated by Cantor in 1878 and appeared as the first item on Hilbert's list of problems announced at the International Congress of Mathematicians in Paris in 1900. Informally, it can be phrased as the question how many real numbers there are. Or, a bit more formally, as the following question: Is there a set $A$ of size strictly between the size of the set of natural numbers $|\mathbb{N}|$ and the size of the set of real numbers $|\mathbb{R}|$? I.e., is there a set $A$ such that \[ |\mathbb{N}| < |A| < |\mathbb{R}|? \]
Gödel showed in the late 1930's that there are models of set theory in which there is no such set $A$. Many years later in 1963 Cohen was able to show that there are also models of set theory in which there is such a set $A$, thereby establishing the independence of the Continuum Problem from $\ZF$. He was awarded the Fields Medal in 1966 for this outstanding result.

We are interested in the question if and how canonical extensions of $\ZF$ decide the Continuum Problem. Interestingly, the three lines of extensions of $\ZF$ described in the introduction give different answers to the Continuum Problem.

\begin{description}
    \item[Determinacy] If the Axiom of Determinacy holds, then there is no set $A$ such that $|\mathbb{N}| < |A| < |\mathbb{R}|$.
    \item[Large Cardinals] Large cardinal axioms do not influence the Continuum Problem, i.e., it is still independent over $\ZFC$ together with the existence of large cardinals.
    \item[Forcing Axioms] Under the Proper Forcing Axioms there is a set $A$ such that  $|\mathbb{N}| < |A| < |\mathbb{R}|$. Even more, there is exactly one such intermediate size between  $|\mathbb{N}|$ and $|\mathbb{R}|$.
\end{description}

This might seem like it is the end of the dream to solve independence phenomena in mathematics by considering canonical hierarchies of axioms extending $\ZF$. In fact, this is only the beginning of a long and fascinating line of research. Instead of just considering the direct effect these extensions have on independent questions in mathematics, we can analyze how ``far away'' the extensions are from $\ZF$ by comparing their \textbf{consistency strength}.

\section{Connecting the hierarchies}

Before we can describe the connections between the hierarchies of determinacy axioms, large cardinal axioms, and forcing axioms, we outline what we mean by a ``hierarchy of axioms''. We illustrate the case for determinacy axioms as this is the easiest to explain without further set theoretic background. 

Recall that a set $A \subseteq \mathbb{N}^{\mathbb{N}}$ is called \emph{determined} iff one of the two players has a winning strategy in the infinite game with payoff set $A$. The Axiom of Determinacy ($\AD$) postulates that all sets of reals $A$ are determined but we can consider natural restrictions of $\AD$ by narrowing the collection of determined sets. In addition, we can consider strengthenings of $\AD$ by allowing more complicated games. This leads, for example, to the following (incomplete) list of axioms, ordered in increasing strength:

\begin{description}
    \item[Open and closed determinacy] Every open and every closed subset of  $\mathbb{N}^{\mathbb{N}}$ is determined.
    \item[Borel determinacy] Every Borel set $B \subseteq \mathbb{N}^{\mathbb{N}}$ is determined.
    \item[Analytic determinacy] Every analytic set $A \subseteq \mathbb{N}^{\mathbb{N}}$ is determined. Here a set $A$ is called \emph{analytic} iff it is the projection of a Borel subset of $\mathbb{N}^{\mathbb{N}} \times \mathbb{N}^{\mathbb{N}}$ to its first coordinate. In general, not every analytic set is Borel.
    \item[Projective determinacy] Every projective set $A \subseteq \mathbb{N}^{\mathbb{N}}$ is determined. Here a set $A$ is called \emph{projective} if it can be obtained from a Borel subset of $(\mathbb{N}^{\mathbb{N}})^n$ for some $n \in \mathbb{N}$ by an iterative alternation of taking complements and projections. By varying $n \in \mathbb{N}$ there is in fact a hierarchy of projective sets beyond the analytic sets.
    \item[Determinacy in $L(\mathbb{R})$] Every set of reals in $L(\mathbb{R})$ is determined. Here $L(\mathbb{R})$ denotes the smallest model of $\ZF$ that contains all reals and all ordinals.
    \item[Axiom of Determinacy ($\AD$)] Every set $A \subseteq \mathbb{N}^{\mathbb{N}}$ is determined.
    \item[Axiom of Determinacy for games on reals ($\AD_{\mathbb{R}}$)] Consider the infinite game where the players are allowed to play real numbers (instead of just natural numbers). Then for every payoff set $A \subseteq \mathbb{R}^{\mathbb{N}}$, one of the players has a winning strategy.
    \item[Axiom of Determinacy for games of uncountable length] Consider the infinite game where the players play uncountably many natural numbers, i.e., the moves in the game are enumerated by ordinals $\alpha < \omega_1$ (instead of just natural numbers). Then for every payoff set $A \subseteq \mathbb{N}^{\omega_1}$, one of the players has a winning strategy.
\end{description}

The first levels of this hierarchy are still provable in $\ZFC$. It was shown by Gale and Stewart in $\ZF$ that every open and every closed subset of  $\mathbb{N}^{\mathbb{N}}$ is determined \cite{GS53}. Here we refer to open and closed in the product topology on $\mathbb{N}^{\mathbb{N}}$ with the discrete topology on $\mathbb{N}$. I.e., for $s \in \mathbb{N}^{n}$ for some natural number $n$, let \[\cO(s) = \{x \in \mathbb{N}^{\mathbb{N}} \mid s \subset x\},\] where $s \subset x$ for $s$ and $x$ as above denotes that $s$ is an initial segment of $x$. Then we say that a set $A \subseteq \mathbb{N}^{\mathbb{N}}$ is \emph{open} iff it is a union of such basic open sets $\cO(s)$ for some finite sequence of natural numbers $s$. Moreover, $A$ is \emph{closed} iff $\mathbb{N}^{\mathbb{N}} \setminus A$ is open.

\begin{theorem}[Gale, Stewart, 1953, \cite{GS53}]
   If $A \subseteq \mathbb{N}^{\mathbb{N}}$ is open or closed, then the game with payoff set $A$ is determined.
 \end{theorem}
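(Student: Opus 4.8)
The plan is to prove the open case by a transfinite backward induction identifying the positions from which player $\mathrm{I}$ can force a win, and then to deduce the closed case by a symmetric argument. Throughout, a \emph{position} is a finite sequence $p \in \mathbb{N}^{<\omega}$, and the player to move from $p$ is determined by the parity of $|p|$ (player $\mathrm{I}$ if $|p|$ is even). Fix an open set $A \subseteq \mathbb{N}^{\mathbb{N}}$. First I would define, by recursion on ordinals, an increasing sequence of sets of positions $W^\alpha$: put $s \in W^0$ exactly when $\cO(s) \subseteq A$ (this base clause is the only place openness enters); let $W^{\alpha+1}$ add to $W^\alpha$ every position $p$ with $\mathrm{I}$ to move such that $p^\frown\langle n\rangle \in W^\alpha$ for some $n \in \mathbb{N}$, together with every position $p$ with $\mathrm{II}$ to move such that $p^\frown\langle n\rangle \in W^\alpha$ for all $n \in \mathbb{N}$; and take unions at limits. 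Let $W = \bigcup_\alpha W^\alpha$. One direction is then routine: from any $p \in W$ the canonical strategy for $\mathrm{I}$ that always moves to a successor entering $W$ at a strictly smaller stage drives the stage down, so after finitely many moves the play reaches some $s$ with $\cO(s) \subseteq A$, after which every continuation lies in $A$; hence $\mathrm{I}$ wins from every $p \in W$. Working with this ranked set rather than with arbitrary winning strategies keeps the whole argument inside $\ZF$: every choice made below — of a least-stage or least-indexed successor — is canonical.

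Next I would treat the case $\langle\rangle \notin W$ and show $\mathrm{II}$ wins. The key local fact is that if $p \notin W$ and it is $\mathrm{II}$'s turn from $p$, then $p^\frown\langle n\rangle \notin W$ for some $n$ — otherwise all successors of $p$ lie in $W$, and $p$ would have been enumerated into $W$ at a later stage. So let $\sigma$ be the strategy for $\mathrm{II}$ that from a position $p \notin W$ plays the least such $n$. Along any run $x$ consistent with $\sigma$ and starting from $\langle\rangle \notin W$, an easy induction shows every initial segment $x \restriction k$ lies outside $W$: this is preserved when $\mathrm{I}$ moves, since no successor of an out-of-$W$ position with $\mathrm{I}$ to move can be in $W$, and when $\mathrm{II}$ moves by the definition of $\sigma$. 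In particular no $x \restriction k$ satisfies $\cO(x \restriction k) \subseteq A$, and since $A$ is open this forces $x \notin A$, so $\mathrm{II}$ wins. Combining the two cases, the open game with payoff $A$ is determined; and if instead $A$ is closed, then $\mathbb{N}^{\mathbb{N}} \setminus A$ is open and $\mathrm{II}$ wins exactly when the play lands in it, so the same construction — now tracking the positions from which $\mathrm{II}$ can force the play into $\mathbb{N}^{\mathbb{N}} \setminus A$ — gives determinacy, the only change being the harmless fact that $\mathrm{I}$ still moves first.

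I do not anticipate a genuine obstacle: this is essentially a one-page backward induction. The one point that deserves care is the closed case, where the roles of the two players are interchanged but the order of play is not; the tidiest way to handle it is to isolate a single lemma — ``if one designated player has an open payoff set then the game is determined'' — and prove it once for each choice of that player, the two arguments being mirror images. It is worth flagging \emph{where} the hypothesis is used: openness says precisely that whenever $\mathrm{I}$ wins, the win is already sealed at some finite stage, which is exactly what makes ``stay outside $W$ forever'' a winning plan for $\mathrm{II}$. This is the feature that disappears for more complex payoff sets, which is why Borel determinacy is already a substantial theorem and why analytic determinacy is not provable in $\ZFC$ at all.
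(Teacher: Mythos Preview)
Your argument is correct, but it takes a different route from the paper. The paper avoids transfinite recursion entirely: it assumes directly that $\mathrm{I}$ has no winning strategy and proves a single local claim --- if $\mathrm{I}$ cannot win from $s$, then for every $i$ there is a $j$ with $\mathrm{I}$ still unable to win from $s^\frown(i,j)$ --- from which $\mathrm{II}$'s strategy is read off by ordinary recursion, and openness is invoked only at the end to show the resulting play avoids $A$. You instead build the ranked set $W$ of $\mathrm{I}$-winning positions by transfinite recursion and then argue separately that $\mathrm{I}$ wins from inside $W$ (by driving the rank down) and $\mathrm{II}$ wins from outside (by staying out). What the paper's approach buys is economy: no ordinals, no rank function, just one contrapositive claim. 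What yours buys is extra structure --- you get an ordinal-valued game rank on positions, which is the natural invariant for comparing open games and the starting point for the finer analysis of clopen versus open determinacy. Both are standard and both handle the $\ZF$-care (canonical choices) the same way; your remark about where exactly openness enters matches the paper's ``membership in an open set is secured at a finite stage''.
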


 \begin{proof}
   The basic idea of the proof is that membership in an open set is secured at a finite stage and games where the winner is decided at a finite stage are determined.

   In what follows we write \[ B / s = \{ x \in \mathbb{N}^{\mathbb{N}} \mid s^\frown x \in B\}\] for all $B \subseteq \mathbb{N}^{\mathbb{N}}$ and $s \in \mathbb{N}^{n}$ for some $n \in \mathbb{N}$. We will use the same notation with the obvious meaning for $B \subseteq \mathbb{N}^{n}$ for $n \in \mathbb{N}$ as well.

   \begin{claim} \label{cl:opendet} Let $s \in \mathbb{N}^{2n}$.
     If I does not have a winning strategy in the game with payoff set $B/s$, then for any $i \in \mathbb{N}$, there is a $j \in \mathbb{N}$ such that I does not have a winning strategy in the game with payoff set $B/s^\frown(i,j)$.
   \end{claim}
   \begin{proof}
     Suppose this is not the case. Let $i \in \mathbb{N}$ be a counterexample, i.e., for any $j \in \mathbb{N}$, I has a winning strategy $\sigma_j$ in the game with payoff set $B/s^\frown(i,j)$.\footnote{Choosing these $\sigma_j$ for each $j$ can be done in a canonical way to avoid using the Axiom of Choice.} Then the following is a winning strategy for I in the game with payoff set $B/s$: Start by playing $i$. Then if II responds with some $j \in \mathbb{N}$, play according to $\sigma_j$ for the rest of the game.
   \end{proof}

   Now say $A \subseteq \mathbb{N}^{\mathbb{N}}$ is open and suppose that I does not have a winning strategy in the game with payoff set $A$. We can apply Claim \ref{cl:opendet} recursively to obtain a strategy $\tau$ for II such that for any partial play $s \in \bigcup_{n\in\mathbb{N}} \mathbb{N}^{2n}$ according to $\tau$, I does not have a winning strategy in the game with payoff set $A/s$.

   Suppose $\tau$ is not a winning strategy for II, i.e., there is a play $x$ according to $\tau$ such that $x \in A$. As $A$ is open, there is some $2n\in \mathbb{N}$ such that $\cO(x\upharpoonright 2n) \subseteq A$. But then any strategy for I in the game with payoff set $A/ (x\upharpoonright 2n)$ would be a winning strategy, contradicting the fact that $x$ is played according to $\tau$.
   Therefore, $\tau$ is a winning strategy for II in the game with payoff set $A$.

   If $A \subseteq \mathbb{N}^{\mathbb{N}}$ is closed, we can use an analogous argument with the roles of I and II interchanged.
 \end{proof}

 Martin showed in 1975 that also Borel determinacy can be proven in $\ZFC$ \cite{Ma75}. But proving stronger forms of determinacy requires extensions of $\ZFC$. For example, Martin showed in 1970 that, assuming the existence of a measurable cardinal, every analytic set of reals is determined \cite{Ma70}. Harrington showed a few years later that the measurable cardinal in the hypothesis of Martin's theorem is in some sense necessary \cite{Ha78}. In fact, the following equivalence is the first level of a surprising as well as deep connection between the hierarchies of determinacy axioms and large cardinals.

 \begin{theorem}[Harrington, Martin, \cite{Ha78, Ma70}]\label{thm:HaMa}
  The following are equivalent:
  \begin{enumerate}
  \item All analytic sets are determined, and
  \item for all $x \in \mathbb{N}^{\mathbb{N}}$, $x^\#$ exists.
  \end{enumerate}
\end{theorem}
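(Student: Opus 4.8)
The plan is to establish the two implications separately: $(2)\Rightarrow(1)$ is Martin's homogeneous-tree argument, and $(1)\Rightarrow(2)$ is Harrington's game-theoretic converse. Both directions use the equivalence of analytic and co-analytic determinacy --- a standard observation, since swapping the two players' moves turns a game with analytic payoff into one with co-analytic payoff and conversely --- so that for $(2)\Rightarrow(1)$ it suffices to prove that every $\PI^1_1$ set is determined, and in $(1)\Rightarrow(2)$ one may use determinacy of $\PI^1_1$ games at will.

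For $(2)\Rightarrow(1)$ I would proceed as follows. Fix a $\PI^1_1$ set $B$; relativising, fix a real $z$ and a recursive-in-$z$ tree $S$ on $\omega\times\omega$ with $x\in B$ iff the section tree $S_x=\{t:(x\upharpoonright|t|,t)\in S\}$ is well-founded, and pass to the associated tree $T$ on $\omega\times\kappa$ of attempts to rank $S_x$, where $\kappa$ is a Silver indiscernible of $L[z]$ above the ordinals relevant to the construction; then $B=p[T]$ and $T\in L[z]$. First I would use $z^\#$ to equip $T$ with a homogeneity system: the Silver indiscernibles of $L[z]$ give, over $L[z]$, an $L[z]$-ultrafilter $U$ on $\kappa$ that is $\kappa$-complete in $L[z]$, and since $z^\#$ exists $U$ is countably complete in $V$, indeed the iterated ultrapowers of $L[z]$ by $U$ are all well-founded. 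Copying $U$ along the finitely many levels of $T$ produces a coherent, countably complete tower of measures $(\mu_s)_{s\in\omega^{<\omega}}$ such that $x\in p[T]$ precisely when the tower of ultrapowers $\Ult(L[z],\mu_{x\upharpoonright n})$, $n<\omega$, is well-founded --- i.e.\ $B$ is homogeneously Suslin. Next I would run the auxiliary-game argument: in the game $G^*$, alongside the natural-number moves building $x$, Player I also plays ordinals $<\kappa$, building a $g$, and wins iff $(x,g)\in[T]$; this payoff is closed, so $G^*$ is determined by the Gale--Stewart argument (which is not specific to games on $\omega$). A winning strategy for Player I in $G^*$ projects to one in the game with payoff $B$; a winning strategy for Player II in $G^*$ is converted to one in the game with payoff $B$ by integrating it against the measures $\mu_s$, the ultrafilter property selecting a canonical next move as the $\mu_s$-almost-everywhere value of the response and countable completeness threading a single branch through the resulting measure-one slices, which produces a contradiction were the resulting real in $p[T]$. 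Hence $B$, and therefore every analytic set, is determined.

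For $(1)\Rightarrow(2)$ I would argue as follows. Assume analytic determinacy and fix a real $a$; the goal is $a^\#$. I would first invoke Silver's characterisation: $a^\#$ exists iff $V$ contains an uncountable set of ordinals that is a set of indiscernibles for $(L[a],\in,a)$, so it suffices to produce such a set. I would then set up Harrington's game $G_a$: a game whose payoff is arithmetic in $a$ together with a $\PI^1_1$ well-foundedness clause, hence determined by hypothesis, in which the two players build longer and longer towers of ordinals, the rules forcing the ordinals played to remain indiscernible over $L[a]$ as far as the play has progressed and the first player unable to move legally losing. The two crucial steps are then: (i) from a winning strategy for either player one extracts an uncountable set of $L[a]$-indiscernibles --- for one player a comparatively direct reading of the strategy, for the other a more intricate diagonalisation showing that any winning strategy already encodes such a set; and (ii) since a single run of $G_a$ yields only $\omega$-many indiscernibles, one iterates the winning strategy transfinitely $\omega_1$ times, invoking an absoluteness argument to see that it remains winning along the iteration and that the ordinals so produced cohere into one increasing $\omega_1$-sequence of genuine indiscernibles. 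By the reduction above, $a^\#$ exists.

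I expect the main obstacle to lie in the Harrington direction, and within it in step (ii): controlling that the genuinely finitary, length-$\omega$ combinatorics of a single run of $G_a$ can be stacked $\omega_1$ times without the strategy degenerating --- together with engineering $G_a$ so that it has low projective complexity while a winning strategy for \emph{either} player is productive of indiscernibles --- is the technical heart of the proof. On the $(2)\Rightarrow(1)$ side, the only genuine subtlety beyond bookkeeping is carrying out the homogeneity and integration arguments from sharps, where the governing ultrafilter is merely an $L[z]$-ultrafilter, rather than from a measurable cardinal as in Martin's original proof.
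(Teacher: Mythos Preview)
The paper does not prove this theorem. It is a survey: immediately after stating the result it remarks that even defining $x^\#$ ``goes beyond the scope of this survey,'' and moves on. There is therefore no proof in the paper against which to compare your proposal.

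A brief comment on the proposal itself nonetheless. Your $(2)\Rightarrow(1)$ sketch is a recognisable outline of Martin's argument, correctly adapted from a measurable to sharps via the $L[z]$-ultrafilter generated by the Silver indiscernibles. Your $(1)\Rightarrow(2)$ sketch, however, does not describe Harrington's proof and, as written, does not look workable. Harrington argues by contraposition: assuming $a^\#$ does \emph{not} exist, he uses Steel's technique of forcing with tagged trees together with an analysis of the admissible ordinals over $L[a]$ to exhibit a specific $\Sigma^1_1$ game that is not determined. There is no game $G_a$ in his proof in which the players ``build towers of ordinals forced to remain indiscernible over $L[a]$'' and from whose winning strategy one reads off indiscernibles; nor is there any step of ``iterating the winning strategy transfinitely $\omega_1$ times.'' The latter idea has no evident mechanism --- a strategy in a length-$\omega$ integer game produces a single real, and stacking $\omega_1$ many plays does not by itself yield a coherent uncountable set of ordinals, let alone one indiscernible for $L[a]$. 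If you want a direct (non-contrapositive) route to $(1)\Rightarrow(2)$, the known arguments go through Martin's theorem that $\PI^1_1$-Turing-determinacy implies sharps, or through the Kechris--Woodin transfer from lightface determinacy; neither resembles the game you sketch.
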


Defining $x^\#$ (``$x$ sharp'') goes beyond the scope of this survey but we should mention that the fact that $x^\#$ exists for all $x \in \mathbb{N}^{\mathbb{N}}$ is a consequence of the existence of a measurable cardinal.

This deep connection between the hierarchies of determinacy axioms and large cardinals persists throughout the projective hierarchy \cite{Ne95, Ne02, MSW} and even beyond. For example, it is known how to construct models of $\AD$ or $\AD_{\mathbb{R}}$ from large cardinals. But this cannot be strengthened arbitrarily: It was shown by Mycielski in the 1960's already that the Axiom of Determinacy for games of uncountable length is inconsistent with $\ZF$.

Connections between the hierarchies of large cardinals and forcing axioms appear at a level in the large cardinal hierarchy much higher than the ones corresponding to $\AD$ or $\AD_{\mathbb{R}}$. For example, the existence of a supercompact cardinal implies that there is a model of Martin's Maximum or the Proper Forcing Axiom. In the other direction, it is known that the Proper Forcing Axiom implies the existence of models with certain large cardinals well below supercompact cardinals. The exact correspondence between large cardinals and forcing axioms is a wide open problem and has already motivated several ground breaking results in set theory in the past decades.

\section{Strong axioms of determinacy}

Most results obtaining so-called \emph{large cardinal lower bounds} for forcing axioms, for example, constructing a model with certain large cardinal from the Proper Forcing Axiom, exploit the deep connection between the large cardinal hierarchy and the determinacy hierarchy. Instead of directly building models with large cardinals, one can, for example, build a model of analytic determinacy, and then use Theorem \ref{thm:HaMa} to obtain large cardinal strength. One reason why this method currently does not succeed in reaching an exact correspondence between the Proper Forcing Axiom and some large cardinal axiom is that the there are no known determinacy axioms that are sufficiently strong to correspond to the levels in the large cardinal hierarchy in question. This leads to the following question:
\[ \text{What are possible ways to strengthen the Axiom of Determinacy?} \]
There are two scenarios how such a strengthening could look like.

\begin{enumerate}
    \item Consider restricted versions of the Axiom of Determinacy for games of uncountable length that are still consistent with $\ZF$. For example, games where the length of the game is only decided during the game but is guaranteed to be countable or games of uncountable length with definable payoff sets.
    \item Keep playing the same games of length $\mathbb{N}$ where the players play natural numbers or reals but at the same time impose additional structural properties on the models. 
\end{enumerate}

There are several promising results for both scenarios (see, for example, \cite{AgMu19, Ne04, Mu21, MS23}) and providing strengthenings of the Axiom of Determinacy that correspond to high levels in the large cardinal hierarchy is among the most important questions at the frontier of modern set theoretic research.

\bibliographystyle{plain}
\bibliography{References}

\begin{thebibliography}{10}

\bibitem{AgMu19}
J.~P. Aguilera and S.~Müller.
\newblock {The consistency strength of long projective determinacy}.
\newblock {\em {J. Symb. Log.}}, 85(1):338–366, 2020.

\bibitem{CMM1}
R.~Carroy, A.~Medini, and S.~Müller.
\newblock {Every zero-dimensional homogeneous space is strongly homogeneous
  under determinacy}.
\newblock {\em J. Math. Log.}, 20:2050015, 2020.

\bibitem{CSS05}
C.~Casacuberta, D.~Scevenels, and J.~H. Smith.
\newblock {Implications of large-cardinal principles in homotopical
  localization}.
\newblock {\em Adv. Math.}, 197(1):120--139, 2005.

\bibitem{DW87}
H.~G. Dales and W.~H. Woodin.
\newblock {\em {An Introduction to Independence for Analysts}}.
\newblock Lond. Math. Soc. Lecture Note Ser. Cambridge University Press, 1987.

\bibitem{EM02}
P.~C. Eklof and A.~H. Mekler.
\newblock {\em {Almost Free Modules}}, volume~65 of {\em North-Holland
  Mathematical Library}.
\newblock {North-Holland Publishing Co.}, 2002.

\bibitem{Fa11}
I.~Farah.
\newblock {All automorphisms of the Calkin algebra are inner}.
\newblock {\em Ann. Math.}, 173(2):619--661, 2011.

\bibitem{Fl82}
W.~G. Fleissner.
\newblock {If all normal Moore spaces are metrizable, then there is an inner
  model with a measurable cardinal}.
\newblock {\em Trans. Amer. Math. Soc.}, 273:365--373, 1982.

\bibitem{FMS88}
M.~Foreman, M.~Magidor, and S.~Shelah.
\newblock Martin's maximum, saturated ideals, and nonregular ultrafilters. {I}.
\newblock {\em Ann. of Math. (2)}, 127(1):1--47, 1988.

\bibitem{GS53}
D.~Gale and Stewart~F. M.
\newblock {Infinite games with perfect information}.
\newblock In H.~W. Kuhn and A.~W. Tucker, editors, {\em Contributions to the
  Theory of Games}, volume~2, pages 245--266. Princeton University Press, 1953.

\bibitem{Ha78}
L.~Harrington.
\newblock {Analytic Determinacy and $0^\#$}.
\newblock {\em Journal of Symbolic Logic}, 43:685--693, 1978.

\bibitem{Kei62}
H.~Jerome Keisler.
\newblock {Some applications of the theory of models to set theory}.
\newblock {\em Proc. Int. Cong. of Logic, Methodology, and Philosophy of
  Science}, pages 80--85, 1962.

\bibitem{La76}
R.~Laver.
\newblock {On the consistency of Borel's conjecture}.
\newblock {\em Acta Math.}, 137:151--169, 1976.

\bibitem{Ma70}
D.~A. Martin.
\newblock {Measurable cardinals and analytic games}.
\newblock {\em Fundamenta Mathematicae}, 66:287--291, 1970.

\bibitem{Ma75}
D.~A. Martin.
\newblock {Borel Determinacy}.
\newblock {\em Annals of Mathematics}, 102(2):363--371, 1975.

\bibitem{MaSo70}
D.~A. Martin and R.~M. Solovay.
\newblock Internal {C}ohen extensions.
\newblock {\em Ann. Math. Logic}, 2(2):143--178, 1970.

\bibitem{Mo10}
Justin~Tatch Moore.
\newblock The proper forcing axiom.
\newblock In {\em Proceedings of the {I}nternational {C}ongress of
  {M}athematicians. {V}olume {II}}, pages 3--29. Hindustan Book Agency, New
  Delhi, 2010.

\bibitem{MSW}
S.~M\"uller, R.~Schindler, and W.~H. Woodin.
\newblock {Mice with finitely many Woodin cardinals from optimal determinacy
  hypotheses}.
\newblock {\em Journal of Mathematical Logic}, 20, 2020.

\bibitem{MySt62}
J.~Mycielski and H.~Steinhaus.
\newblock A mathematical axiom contradicting the axiom of choice.
\newblock {\em Bull. Acad. Polon. Sci. S\'{e}r. Sci. Math. Astronom. Phys.},
  10:1--3, 1962.

\bibitem{Mu21}
S.~Müller.
\newblock The consistency strength of determinacy when all sets are universally
  {B}aire.
\newblock 2021.

\bibitem{MS23}
S.~Müller and G.~Sargsyan.
\newblock Towards a generic absoluteness theorem for {C}hang models.
\newblock 2023.

\bibitem{Ne95}
I.~Neeman.
\newblock {Optimal Proofs of Determinacy}.
\newblock {\em Bulletin of Symbolic Logic}, 1(3):327--339, 09 1995.

\bibitem{Ne02}
I.~Neeman.
\newblock {Optimal Proofs of Determinacy II}.
\newblock {\em Journal of Mathematical Logic}, 2(2):227--258, 11 2002.

\bibitem{Ne04}
I.~Neeman.
\newblock {\em The Determinacy of Long Games}.
\newblock De Gruyter series in logic and its applications. Walter de Gruyter,
  2004.

\bibitem{Ny80}
P.~J. Nyikos.
\newblock {A provisional solution to the normal Moore space problem}.
\newblock {\em Proc. Amer. Math. Soc.}, 78:429--435, 1980.

\bibitem{Sc61}
Dana~S. Scott.
\newblock {Measurable cardinals and constructible sets}.
\newblock {\em Bulletin de l’Académie Polonaise des Sciences}, 9:521–524,
  1961.

\bibitem{Sh74}
S.~Shelah.
\newblock {Infinite abelian groups, Whitehead problem and some constructions}.
\newblock {\em Israel J. Math.}, 18:243--256, 1974.

\bibitem{SoTe71}
R.~M. Solovay and S.~Tennenbaum.
\newblock Iterated {C}ohen extensions and {S}ouslin's problem.
\newblock {\em Ann. of Math. (2)}, 94:201--245, 1971.

\bibitem{Ul30}
S.~Ulam.
\newblock Zur {M}asstheorie in der allgemeinen {M}engenlehre.
\newblock {\em Fundamenta Mathematicae}, 16:140--150, 1930.

\end{thebibliography}

\end{document}